\theoremstyle{plain}
\newtheorem{teo}{Theorem}[section]
\newtheorem{lem}[teo]{Lemma}
\newtheorem{prop}[teo]{Proposition}
\newtheorem{cor}[teo]{Corollary}
\theoremstyle{definition}
\newtheorem{defi}[teo]{Definition}
\newtheorem{ex}[teo]{Example}
\newtheorem{rems}[teo]{Remarks}
\numberwithin{equation}{teo}
\newcommand{\Cbb}{{\mathbb C}}
\newcommand{\Zbb}{{\mathbb Z}}
\newcommand{\Pbb}{{\mathbb P}}
\newcommand{\Gbb}{{\mathbb G}}
\newcommand{\lra}{\longrightarrow}
\newcommand{\lmt}{\longmapsto}
\begin{document}

\title{Vanishing theorem for the cohomology of line bundles on Bott-Samelson varieties}

\author{Boris Pasquier}
\maketitle

\begin{abstract}
We use the toric degeneration of Bott-Samelson varieties and the description of cohomolgy of line bundles on toric varieties to deduce vanishings results for the cohomology of lines bundles on Bott-Samelson varieties. 
\end{abstract}

%\textbf{Mathematics Subject Classification.}\\

%\textbf{Keywords.}

%\tableofcontents

\section*{Introduction}

Bott-Samelson varieties were originally defined as desingularizations of Schubert varieties and were used to describe the geometry of Schubert varieties. In particular, the cohomology of some line bundles on Bott-Samelson varieties were used to prove that Schubert varieties are normal, Cohen-Macaulay and with rational singularities (see for example \cite{BK05}). In this paper, we will be interested in the cohomology of all line bundles of Bott-Samelson varieties.

We consider a Bott-Samelson variety  $X(\tilde{w})$  over an algebraically closed field $\Bbbk$  associated to an expression $\tilde{w}=s_{\beta_1}\dots s_{\beta_N}$ of an element $w$ in the Weyl group of a Kac-Moody group $G$ over $\Bbbk$ (see Definition \ref{BS} (i)).

In the case where $G$ is semi-simple,  N.~Lauritzen and J.F.~Thomsen proved, using Frobenius splitting, the vanishing of the cohomology in positive degree of line bundles on $X(\tilde{w})$ of the form $\mathcal{L}(-D)$ where $\mathcal{L}$ is any globally generated line bundle on $X(\tilde{w})$ and $D$ a subdivisor of the boundary of $X(\tilde{w})$ corresponding to a reduced expression of $w$ \cite[Th7.4]{LT04}.  The aim of this paper is to give the vanishing in some degrees of the cohomology of any line bundles on $X(\tilde{w})$.\\

Let us define, for all $\epsilon=(\epsilon_k)_{k\in\{1,\dots,N\}}\in\{+,-\}^N$ and for all integers $1\leq i<j\leq N$, $$\alpha_{ij}^\epsilon:=\langle\beta_i^\vee,(\prod_{i<k<j,\,\epsilon_k=-}s_{\beta_k})(\beta_j)\rangle.$$
These integers are natural geometric invariants of the Bott-Samelson variety, they also appear, for example, in \cite[Theorem 3.21]{Wi06} in product formula in the equivariant cohomology of complex Bott-Samelson varieties .

Since $X(\tilde{w})$ is smooth, we can consider divisors instead of line bundles. Thus, let us denote by $Z_1,\dots,Z_N$ the natural basis of divisors of $X(\tilde{w})$  (see Definition \ref{BS} (ii)).
Let $D:=\sum_{i=1}^Na_iZ_i$ be any divisor of $X(\tilde{w})$.  

Let $i\in\{1,\dots,N\}$. We say that $D$ satisfies condition $(C_i^+)$ if for all $\epsilon\in\{+,-\}^N$, we have $$C_i^\epsilon:=a_i+\sum_{j>i,\,\epsilon_j=+}\alpha_{ij}^\epsilon a_j\geq -1$$ and we say that $D$ satisfies condition $(C_i^-)$ if for all $\epsilon\in\{+,-\}^N$, we have $$C_i^\epsilon:=a_i+\sum_{j>i,\,\epsilon_j=+}\alpha_{ij}^\epsilon a_j\leq -1.$$

The main result of this paper is the following.

\begin{teo}\label{main}
Let $X(\tilde{w})$ be a Bott-Samelson variety and $D$ a divisor of $X(\tilde{w})$. Let $\eta\in\{+,-,0\}^N$. Define two integers $\eta^+:=\sharp\{1\leq j \leq N\mid\eta_j=+\}$ and $\eta^-:=\sharp\{1\leq j \leq N\mid\eta_j=-\}.$

Suppose that $D$ satisfies conditions $(C_i^{\eta_i})$ for all $i\in\{1,\dots,N\}$ such that $\eta_i\neq 0$.

Then $H^i(X,D)=0,\mbox{ for all }i<\eta^-\mbox{ and for all }i>N-\eta^+.$
\end{teo}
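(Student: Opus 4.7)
The strategy, as signalled by the abstract, is to reduce everything to a toric variety. I plan to use the toric degeneration of $X(\tilde{w})$, in the spirit of Grossberg--Karshon, namely a flat family $\pi\colon\mathcal{X}\to\mathbb{A}^1$ whose generic fibre is $X(\tilde{w})$ and whose special fibre is a smooth projective toric variety $X_0$ of dimension $N$. The divisors $Z_1,\dots,Z_N$ extend to $\mathcal{X}$ and restrict to explicit torus-invariant divisors on $X_0$; the integers $\alpha_{ij}^\epsilon$ should appear as intersection numbers on $X_0$ between the $Z_i$ and torus-invariant curves indexed by the sign vectors $\epsilon$. By upper semicontinuity of $h^i$ in the flat family $\pi$, it suffices to prove the stated vanishing for the restriction $D_0$ of $D$ to $X_0$.

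On the toric side I would invoke the classical combinatorial description of line-bundle cohomology (Demazure, see for instance Cox--Little--Schenck): for each character $m$ of the torus, there is a simplicial complex $V_{D_0,m}$ built out of the rays $\rho$ of the fan with $\langle m,v_\rho\rangle+a_\rho<0$, and $H^i(X_0,\mathcal{O}(D_0))$ decomposes as a direct sum over $m$ of the reduced cohomologies of the $V_{D_0,m}$, shifted by one in degree. The problem thus becomes to show, under conditions $(C_i^{\eta_i})$ and uniformly in $m$, that $V_{D_0,m}$ has no reduced cohomology in the appropriate low and high degrees.

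The bridge between the conditions $(C_i^\pm)$ and the topology of $V_{D_0,m}$ should work as follows. Each $\epsilon\in\{+,-\}^N$ labels a maximal cone of the fan of $X_0$ (equivalently a torus-fixed point), and in the local coordinates around that fixed point the coefficient of $Z_i$ in $D_0$ is exactly $C_i^\epsilon$. The uniform lower bound $C_i^\epsilon\geq-1$ in hypothesis $(C_i^+)$ then forces, independently of $\epsilon$, that the link of the vertex associated to $Z_i$ in $V_{D_0,m}$ is either empty or coincides with the full relevant subcomplex, providing a deformation retraction that kills the top cohomology; the dual discussion applies to $(C_i^-)$ and to the bottom cohomology. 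Composing these retractions over the $\eta^++\eta^-$ indices $i$ with $\eta_i\neq 0$ should yield the announced vanishing in degrees $i<\eta^-$ and $i>N-\eta^+$.

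The main obstacle is precisely this last combinatorial step: making the collapses on $V_{D_0,m}$ explicit uniformly in $m$, and verifying that the retractions attached to distinct indices $i$ commute and combine compatibly. A tower argument should help: writing $X(\tilde{w})$ as a $\Pbb^1$-bundle over $X(\tilde{w}')$, with $\tilde{w}'=s_{\beta_1}\dots s_{\beta_{N-1}}$, and matching this structure with the corresponding bundle structure on $X_0$, should reduce the analysis one coordinate at a time, at which point the effect of a single condition $(C_N^{\eta_N})$ on the fibre direction becomes transparent and the argument closes by induction on $N$.
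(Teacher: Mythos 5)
Your overall framework is exactly the paper's: degenerate $X(\tilde{w})$ to the Bott tower $\mathfrak{X}(0)$, invoke semicontinuity, and analyse the weight spaces $H^i(\mathfrak{X}(0),\mathcal{D}(0))_m$ via Demazure's simplicial complexes $\Sigma_m$. Up to that point the proposal is sound, and the structure of the fan (rays $e_i^+$ and $e_i^-=-e_i^+-\sum_{j>i}\beta_{ij}e_j^+$, maximal cones indexed by $\{+,-\}^N$) is what makes the toric side tractable: $\Sigma_m$ is either a cone (hence contractible, contributing nothing) whenever some pair $\phi_m(e_i^+),\phi_m(e_i^-)$ has mixed signs, or else the boundary complex of a cross-polytope of dimension $j_m=\sharp\{i\mid\phi_m(e_i^+)<0\}$, in which case only $H^{j_m}$ survives. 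The whole theorem therefore reduces to bounding $j_m$ uniformly over the contributing characters $m$ --- and that is precisely the step you leave open and yourself flag as ``the main obstacle.''

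That gap is genuine, and neither of your two suggested ways around it closes it. The link/retraction idea conflates a hypothesis on the divisor coefficients (namely $C_i^\epsilon\geq -1$ for all $2^N$ sign vectors $\epsilon$) with a statement about a fixed character $m$: there is no direct way to read off the sign of $\phi_m(e_i^+)=m_i+a_i$ for an arbitrary contributing $m$ from the $C_i^\epsilon$ without first locating those $m$. The paper's key input, absent from your proposal, is a convexity lemma: if for every $i$ the numbers $\phi_m(e_i^+)$ and $\phi_m(e_i^-)$ are both $\geq 0$ or both $<0$ (the only case with nonzero cohomology), then $m$ lies in the convex hull of $2^N$ explicit lattice points $x^\epsilon$ defined by the recursion $x^\epsilon_i=-a_i$ if $\epsilon_i=+$ and $x^\epsilon_i=-\sum_{j>i}\beta_{ij}x^\epsilon_j$ if $\epsilon_i=-$; a separate computation identifies $\phi_{x^\epsilon}(e_i^{\epsilon_i})=0$ and $\phi_{x^\epsilon}(e_i^{-\epsilon_i})=C_i^\epsilon$, which is where the $\alpha_{ij}^\epsilon$ actually enter. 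Writing $m=\sum_\epsilon m^\epsilon x^\epsilon$ as a convex combination then gives $\phi_m(e_i^+)=\sum_{\epsilon_i=-}m^\epsilon C_i^\epsilon$ and $\phi_m(e_i^-)=\sum_{\epsilon_i=+}m^\epsilon C_i^\epsilon$, so the sign hypotheses $(C_i^{\pm})$ transfer directly to $\phi_m(e_i^{\pm})$ and force $\eta^-\leq j_m\leq N-\eta^+$. Your fallback induction on the $\Pbb^1$-bundle tower is in principle a different route (closer to Grossberg and Karshon's original recursion), but after one pushforward the sheaf on the base is no longer a line bundle, and tracking the conditions $(C_i^{\pm})$ through $R^0\pi_*$ and $R^1\pi_*$ is exactly the difficulty you would need to resolve; as written it is a plan, not a proof.
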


Let us remark that, Conditions $(C_N^+)$ and $(C_N^-)$ are respectively $a_N\geq -1$ and $a_N\leq -1$, so that $\eta_N$ can always be chosen different from $0$. Thus, for any divisor $D$ of $X(\tilde{w})$,  Theorem \ref{main} gives the vanishing of the cohomology of $D$ in at least one degree.

Although Theorem \ref{main} gives us a lot of cases of vanishing, it does not permit to recover all the result of N.~Lauritzen and J.P.~Thomsen. See Example \ref{exemple} to illustrate this facts.

However, for  lots of divisors, Theorem \ref{main} gives the vanishing of their cohomology in all degrees except one. More precisely, we have the following.
\begin{cor}\label{cornonvide}
Let $D=\sum_{i=1}^Na_iZ_i$ be a divisor of $X=X(\tilde{w})$. Suppose that, for all $i\in\{1,\dots,N\}$, one of the following two conditions $\tilde{C}_i^+$ and $\tilde{C}_i^-$ is satisfied:
$$\begin{array}{cc}
\tilde{C}_i^+: & a_i\geq -1+\max_{\epsilon\in\{+,-\}^N}(-\sum_{j>i,\,\epsilon_j=+}\alpha_{ij}^\epsilon a_j) \\
\tilde{C}_i^-: & a_i\leq -1+\min_{\epsilon\in\{+,-\}^N}(-\sum_{j>i,\,\epsilon_j=+}\alpha_{ij}^\epsilon a_j)
\end{array}.$$
Then, $H^i(X,D)=0$ for all $i\neq\sharp\{1\leq j\leq N\mid\tilde{C}_j^-\mbox{ is satisfied }\}$.
\end{cor}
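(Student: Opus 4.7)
The plan is to deduce the corollary by a single direct application of Theorem \ref{main} for a cleverly chosen $\eta$. The work is essentially bookkeeping, together with one application of inclusion–exclusion.

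First I would unpack the two conditions $\tilde{C}_i^\pm$ and observe that they are literally the same as $(C_i^\pm)$: the inequality $a_i\geq -1+\max_\epsilon\bigl(-\sum_{j>i,\,\epsilon_j=+}\alpha_{ij}^\epsilon a_j\bigr)$ holds if and only if $a_i+\sum_{j>i,\,\epsilon_j=+}\alpha_{ij}^\epsilon a_j\geq -1$ for every $\epsilon$, and similarly with $\leq-1$ for $\tilde{C}_i^-$ and $(C_i^-)$. So the hypothesis of the corollary becomes: for every $i$, either $(C_i^+)$ or $(C_i^-)$ holds. Write $S^+:=\{i:(C_i^+)\text{ holds}\}$, $S^-:=\{i:(C_i^-)\text{ holds}\}$, so that $S^+\cup S^-=\{1,\dots,N\}$ and $k=|S^-|$.

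Next, I would define $\eta\in\{+,-,0\}^N$ by
\[
\eta_i:=\begin{cases}-&\text{if }i\in S^-,\\ +&\text{if }i\in S^+\setminus S^-.\end{cases}
\]
No coordinate of $\eta$ is $0$, and by construction condition $(C_i^{\eta_i})$ holds for every $i$, so the hypothesis of Theorem \ref{main} is met. Counting gives $\eta^-=|S^-|=k$ and $\eta^+=|S^+\setminus S^-|=|S^+|-|S^+\cap S^-|$. Using $|S^+\cup S^-|=N$, i.e.\ $|S^+|+|S^-|-|S^+\cap S^-|=N$, one gets $\eta^+=N-|S^-|=N-k$, so $N-\eta^+=k$.

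Finally, Theorem \ref{main} gives $H^i(X,D)=0$ for all $i<\eta^-=k$ and for all $i>N-\eta^+=k$, i.e.\ for all $i\neq k$, which is exactly the statement of the corollary. There is really no difficult step here; the only point that needs care is the inclusion–exclusion identity $\eta^++\eta^-=N$, which crucially uses the hypothesis $S^+\cup S^-=\{1,\dots,N\}$ to force $N-\eta^+$ to coincide with $\eta^-=k$, so that the two vanishing ranges provided by Theorem \ref{main} meet at the single degree $k$.
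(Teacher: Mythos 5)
Your proof is correct and follows exactly the route the paper intends: the paper gives no separate proof of Corollary \ref{cornonvide}, presenting it as an immediate consequence of Theorem \ref{main}, and your unpacking of $\tilde{C}_i^\pm$ as $(C_i^\pm)$, the choice of $\eta$ with no zero entries, and the count $\eta^-=k$, $N-\eta^+=k$ supply precisely the omitted bookkeeping. (The inclusion--exclusion step is slightly more than needed, since $S^+\cup S^-=\{1,\dots,N\}$ already gives $S^+\setminus S^-=\{1,\dots,N\}\setminus S^-$ directly, but it is correct.)
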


Let us remark that, for all $\eta\in\{+,-\}^N$, the set of points $(a_i)\in\Zbb^N$ satisfying $\tilde{C}_j^{\eta_j}$ for all $j\in\{1,\dots,N\}$ is a non empty cone. So that Corollary \ref{cornonvide} can be applied to infinitly  many divisors.\\

The strategy of the proof of Theorem \ref{main} is the following. In Section \ref{1}, we define and describe a family of deformation with general fibers the Bott-Samelson variety and with special fiber a toric variety. The toric variety we obtain is a Bott tower, its fan has a simple and well understood structure (for example it has $2N$ cones of dimension~1 and $2^N$ cones of dimension $N$). In Section \ref{2}, we describe how to compute the cohomology of divisors on the special fiber and we prove the same vanishings as in Theorem \ref{main} but for divisors on this toric variety. 
Then Theorem \ref{main} is a direct consequence of the semicontinuity Theorem \cite[III 12.8]{Ha77}.

\section{Toric degeneration of Bott-Samelson varieties}\label{1}

In this section we rewrite the theory of M.~Grossberg and Y.~Karshon \cite{GK94} on Bott-towers, in the case of Bott-Samelson varieties and in an algebraic point of view.

Let $A=(a_{ij})_{1\leq i,j\leq n}$ be a generalized Cartan matrix, {\it i.e.} such that (for all $i,j$) $a_{ii}=2$, $a_{ij}\leq 0$ for $i\neq j$, and $a_{ij}=0$ if $a_{ji}=0$.
Let $G$ be the ``maximal'' Kac-Moody group over $\Bbbk$ associated to $A$ constructed in \cite[Section 6.1]{Ku02} (see \cite{Ti81a} and \cite{Ti81b} in arbitrary characteristic). Note that, in the finite case, $G$ is the simply-connected semisimple algebraic group over $\Bbbk$.  Denote by $B$ the standard Borel subgroup of $G$ containing the standard maximal torus $T$.
Let $\alpha_1,\dots,\alpha_n$ be the simple roots of $(G,B,T)$ and $s_{\alpha_1},\dots,s_{\alpha_n}$ the associated simple reflections generating the Weyl Group $W$.
For all $i\in\{1,\dots,n\}$, denote by $P_{\alpha_i}:=B\cup Bs_{\alpha_i}B$ the minimal parabolic subgroup containing $B$ associated to $\alpha_i$.
Let $w\in W$ and $\tilde{w}:=s_{\beta_1}\dots s_{\beta_N}$ be an expression (not neccessarily reduced) of $w$, with $\beta_1,\dots,\beta_N$ simple roots.
For all $i$ and $j$ in $\{1,\dots,N\}$, denote by $\beta_{ij}$ the integer $\langle\beta_i^\vee,\beta_j\rangle$.

\begin{defi}\label{BS}
\begin{enumerate}[(i)]
\item The Bott-Samelson variety associated to $\tilde{w}$ is $$X(\tilde{w}):=P_{\beta_1}\times^B\dots\times^B P_{\beta_N}/B$$ where the action of $B^N$ on $P_{\beta_1}\times\dots\times P_{\beta_N}$ is defined by $$(p_1,\dots,p_N).(b_1,\dots,b_N)=(p_1b_1,b_1^{-1}p_2b_2,\dots,b_{N-1}^{-1}p_Nb_N),\,\forall p_i\in P_{\beta_i},\,\forall b_i\in B.$$
\item For all $i\in\{1,\dots,N\}$, we denote by $Z_i$ the divisor of $X(\tilde{w})$ defined by $\{(p_1,\dots,p_N)\in X(\tilde{w})\mid p_i\in B\}$.
Thus $(Z_i)_{i\in\{1,\dots,N\}}$ is a basis of the Picard group of $X(\tilde{w})$, and if $\tilde{w}$ is reduced it is the basis of effective divisor \cite[Section 3]{LT04}.
\end{enumerate}
\end{defi}

In order to define a toric degeneration of a Bott-Samelson variety, we need to introduce particular endomorphisms of $G$ and $B$.

Since the simple roots are linearly independant elements in the character group of $G$, one can choose a positive integer $q$ and an injective morphism $\lambda:\Bbbk^*\lra T$ such that for all $i\in\{1,\dots,n\}$ and all $u\in \Bbbk^*$, $\alpha_i(\lambda(u))=u^q$.
And let us define, for all $u\in \Bbbk^*$, $$\begin{array}{cccc}
\tilde{\psi_u} : & G & \lra & G \\
 & g & \lmt & \lambda(u)g\lambda(u)^{-1}.
\end{array}$$

The morphism $\psi$ from $\Bbbk^*$ to the set of endomorphism of $B$ defined by $\psi(u)=\tilde{\psi_u}_{|B}$ can be continuously extended to $0$. Indeed, the unipotent radical $U$ of $B$ lives in a group (denoted by $U^{(1)}$ in \cite{Ti81b}) where the action of $t\in T$ by conjugation is, on some generators (except the identity), the multiplication by some positive powers of $\alpha_i(t)$ for some $i\in\{1,\dots,n\}$. Then for all $x\in U$, $\psi(u)$ goes to the identity when $u$ goes to zero.

We denote, for all $u\in \Bbbk$, by $\psi_u$ the morphism $\psi(u)$. Remark that $\psi_0$ is the projection from $B$ to $T$.

We are now able to give the following 
\begin{defi}
\begin{enumerate}[(i)]
\item Let $\mathfrak{X}\lra \Bbbk$ be the variety defined by $$\mathfrak{X}:=\Bbbk\times P_{\beta_1}\times\dots\times P_{\beta_N}/B^N$$  where the action of $B^N$ on $\Bbbk\times P_{\beta_1}\times\dots\times P_{\beta_N}$ is defined by $\forall u\in \Bbbk,\,\forall p_i\in P_{\beta_i},\,\forall b_i\in B$, $$(u,p_1,\dots,p_N).(b_1,\dots,b_N)=(u,p_1b_1,\psi_u(b_1)^{-1}p_2b_2,\dots,\psi_u(b_{N-1})^{-1}p_Nb_N).$$
\item  For all $i\in\{1,\dots,N\}$, we denote by $\mathcal{Z}_i$ the divisor of $\mathfrak{X}$ defined by $$\{(u,p_1,\dots,p_N)\in \mathfrak{X}\mid p_i\in B\}.$$
\end{enumerate}
\end{defi}

For all $u\in \Bbbk$, we denote by $\mathfrak{X}(u)$ the fiber of $\mathfrak{X}\lra \Bbbk$ over $u$.

\begin{prop}
\begin{enumerate}[(i)] \item For all $u\in \Bbbk^*$, $\mathfrak{X}(u)$ is isomorphic to the Bott-samelson variety $X(\tilde{w})$ such that, for all $i\in\{1,\dots,N\}$, the divisor $\mathcal{Z}_i(u):=\mathfrak{X}(u)\cap\mathcal{Z}_i$  corresponds to the divisor $Z_i$ of $X(\tilde{w})$.
\item $\mathfrak{X}(0)$ is a toric variety of dimension $N$.
\end{enumerate}
\end{prop}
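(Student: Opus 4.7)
Part (i) plan. Since $\lambda(u) \in T$, conjugation by $\lambda(u)^k$ preserves every $P_{\beta_i}$ and $B$, and coincides on $B$ with $\psi_{u^k}$. My idea is to exhibit the isomorphism $\mathfrak{X}(u) \xrightarrow{\sim} X(\tilde{w})$ explicitly, by the formula on representatives
\[
\Phi_u(p_1, \ldots, p_N) := \left(p_1,\ \lambda(u)^{-1} p_2\, \lambda(u),\ \lambda(u)^{-2} p_3\, \lambda(u)^2,\ \ldots,\ \lambda(u)^{-(N-1)} p_N\, \lambda(u)^{N-1}\right).
\]
A direct check (using $\psi_u(b) = \lambda(u) b \lambda(u)^{-1}$) converts the twisted $\mathfrak{X}(u)$-action by $(b_1, \ldots, b_N) \in B^N$ into the standard $X(\tilde{w})$-action by $(b'_1, \ldots, b'_N)$, where $b'_i := \lambda(u)^{-(i-1)} b_i \lambda(u)^{i-1}$. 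Since $(b_i) \mapsto (b'_i)$ is an automorphism of $B^N$, $\Phi_u$ descends to an isomorphism of the two quotients. Moreover, conjugation by $\lambda(u)^{i-1} \in T \subset B$ preserves $B$, so the condition $p_i \in B$ is preserved and $\Phi_u$ matches $\mathcal{Z}_i(u)$ with $Z_i$.

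Part (ii) plan. At $u = 0$, $\psi_0 : B \to T$ is the projection onto the torus (kernel equal to the unipotent radical $U$, which is collapsed to $1$ by $\psi_u$ as $u \to 0$, exactly as explained just before the definition of $\mathfrak{X}$). My plan is to argue by induction on $k$ that the partial quotient
\[
\mathfrak{X}_k(0) := (P_{\beta_1} \times \cdots \times P_{\beta_k})/B^k \quad \text{at } u = 0
\]
is a smooth projective toric variety of dimension $k$, sitting in an iterated tower
\[
\mathfrak{X}_N(0) \to \mathfrak{X}_{N-1}(0) \to \cdots \to \mathfrak{X}_1(0) = P_{\beta_1}/B \cong \Pbb^1,
\]
in which each arrow is a locally trivial $\Pbb^1$-bundle of the form $\Pbb(\Ocal \oplus L_k)$ for some line bundle $L_k$ on $\mathfrak{X}_{k-1}(0)$. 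The key inductive input is that, because $\psi_0$ factors through $T$, the residual $B$-action on the fiber $P_{\beta_k}/B \cong \Pbb^1$ is really a $T$-action and is linearizable; the bundle is therefore torus-equivariant and splits as the projectivization of a rank-$2$ split bundle, and each step enlarges the acting torus by a new $\Bbbk^*$-factor that keeps the big orbit dense. This is the algebraic incarnation of the Bott-tower construction of Grossberg--Karshon \cite{GK94}, hence $\mathfrak{X}(0) = \mathfrak{X}_N(0)$ is toric of dimension $N$.

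Main obstacle. The calculation in (i) is essentially bookkeeping for how the powers of $\lambda(u)$ absorb the $\psi_u$-twist across the $N$ factors. The substance is in (ii), and the real obstacle is not the mere existence of a toric structure but its explicit identification: for the cohomology computations of Section~\ref{2} one needs the line bundles $L_k$ (equivalently the fan and the divisor classes of $\mathfrak{X}(0)$) pinned down in terms of the simple roots $\beta_i$ and the integers $\beta_{ij}$, which requires carefully tracking the $T$-characters that appear at each inductive step.
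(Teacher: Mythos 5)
Your proposal is correct. For part (i) you are using essentially the paper's argument: the paper defines $\theta_u:\mathfrak{X}(1)\to\mathfrak{X}(u)$, $(p_1,\dots,p_N)\mapsto(p_1,\tilde{\psi_u}(p_2),\dots,\tilde{\psi_u}^{N-1}(p_N))$, which is exactly the inverse of your $\Phi_u$, and the equivariance bookkeeping and the observation that conjugation by $\lambda(u)^{i-1}\in T$ preserves the condition $p_i\in B$ are the same. For part (ii) you take a genuinely different route. The paper does not build the Bott tower here: it directly defines an action of $\prod_{i=1}^N T/T_{\beta_i}\cong(\Bbbk^*)^N$ on $\mathfrak{X}(0)$ by simultaneous conjugation $(t_1,\dots,t_N).(p_1,\dots,p_N)=(t_1p_1t_1^{-1},\dots,t_Np_Nt_N^{-1})$, which descends to the quotient precisely because $\psi_0$ takes values in the commutative group $T$, and then notes that since each $T/T_{\beta_i}$ has an open orbit in $P_{\beta_i}/B\cong\Pbb^1$, this torus has a dense orbit in $\mathfrak{X}(0)$. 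Your iterated $\Pbb(\Ocal\oplus L_k)$-bundle induction is the Grossberg--Karshon construction and is also valid; it buys you smoothness, projectivity and the tower structure explicitly, but it leaves the acting torus and the fan to be extracted afterwards by tracking characters (the difficulty you flag at the end). The paper's conjugation action sidesteps that entirely for the purposes of this proposition, and moreover it is exactly the torus used in Proposition \ref{fan}, where the fan and the classes of the $\mathcal{Z}_i(0)$ are computed directly from the coordinates $X_i$ on the affine charts $\phi_\epsilon(\Bbbk^N)$ rather than from the line bundles $L_k$; so the explicit identification you describe as the main obstacle is deferred to, and handled by, that separate computation.
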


\begin{proof}
\begin{enumerate}[(i)] \item  Remark first that $\mathfrak{X}(1)$ is by definition the Bott-Samelson variety and, that for all $i\in\{1,\dots,N\}$, $\mathcal{Z}_i(1)=Z_i$  . Now let $u\in \Bbbk^*$ and check that $$\begin{array}{cccl}
\theta_u : & \mathfrak{X}(1) & \lra & \mathfrak{X}(u) \\
 & (p_1,\dots,p_N) & \lmt & (p_1,\tilde{\psi_u}(p_2),\tilde{\psi_u}^2(p_3),\dots,\tilde{\psi_u}^{N-1}(p_N)).
\end{array}$$ is well-defined and is an isomorphism. Moreover, for all $i\in\{1,\dots,N\}$, $p_i$ is in $B$ if and only if $\tilde{\psi_u}(p_i)$ is in $B$, so that $\theta_u(Z_i)=\mathcal{Z}_i(u)$.
\item  Let $T_{\beta_i}$ be the maximal subtorus of $T$ acting trivially on $P_{\beta_i}/B\simeq \Pbb^1_\Bbbk$. Now, since $\psi_0(b)$ commutes with $T$ for all $b\in B$, one can define an effective action of $\prod_{i=1}^NT/T_{\beta_i}\simeq (\Bbbk^*)^N$ on $\mathfrak{X}(0)$ as follows
$$\forall t_i\in T,\,\forall p_i\in P_{\beta_i},\,(t_1,\dots,t_N).(p_1,\dots,p_N)=(t_1p_1t_1^{-1},t_2p_2t_2^{-1},\dots,t_Np_Nt_N^{-1}).$$
Moreover, since $T/T_{\beta_i}\simeq\Bbbk^*$ acts on $P_{\beta_i}/B\simeq\Pbb^1_\Bbbk$ with an open orbit, $(\Bbbk^*)^N$ acts also with an open orbit in $\mathfrak{X}(0)$.
\end{enumerate}
\end{proof}

\begin{prop}\label{fan}
Let $(e_1^+,\dots,e_N^+)$ be a basis of $\Zbb^N$. Define, for all $i\in\{1,\dots,N\}$, the vector $e_i^-:=-e_i^+-\sum_{j>i}\beta_{ij}e_j^+$.

Then a fan $\mathbb{F}$ of $\mathfrak{X}(0)$ consists of cones generated by subsets of $\{e_1^+,\dots,e_N^+,e_1^-,\dots,e_N^-\}$ containing no subset of the form $\{e_i^+,e_i^-\}$. (In other words,the fan whose maximal cones are the cones generated by $e_1^{\epsilon_1},\dots,e_N^{\epsilon_N}$ with $\epsilon\in\{+,-\}^N$.)

Moreover,  for all $i\in\{1,\dots,N\}$, $\mathcal{Z}_i(0)$ is the irreducible $(\Bbbk^*)^N$-stable divisor of $\mathfrak{X}(0)$ corresponding to the one dimensional cone of $\mathbb{F}$ generated by $e_i^+$.
\end{prop}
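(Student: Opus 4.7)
Since $\mathfrak{X}(0)$ is a smooth toric variety of dimension $N$ under the torus $(\Bbbk^*)^N = \prod_{i=1}^N T/T_{\beta_i}$, its fan is determined by the torus-fixed points and the tangent characters there. The plan is to identify the $2^N$ fixed points, compute the tangent characters at each, and dualize to read off the maximal cones of the fan; the rays and their relations then follow by matching shared edges across adjacent cones.

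The fixed points of $(\Bbbk^*)^N$ on $\mathfrak{X}(0)$ are indexed by $\epsilon\in\{+,-\}^N$: writing $p_i^+ = e$ and $p_i^- = s_{\beta_i}$, the fixed point is $p_\epsilon = [p_1^{\epsilon_1},\dots,p_N^{\epsilon_N}]$. At each $p_\epsilon$, I would compute the torus characters on the tangent space using local coordinates in each factor coming from the appropriate root subgroup: the $j$-th tangent direction receives a principal contribution $-\beta_j$ when $\epsilon_j = +$ and $+\beta_j$ when $\epsilon_j = -$. In addition, the equivalence relation defining $\mathfrak{X}(0)$ forces a canonicalization at each index $i$ with $\epsilon_i = -$: one writes $t_i s_{\beta_i} t_i^{-1} = s_{\beta_i}\cdot b_i$ with $b_i = s_{\beta_i}(t_i) t_i^{-1}\in T$ and absorbs $b_i$ through the equivalence into the subsequent factor via a $\psi_0(b_i)$-left-translation on $p_{i+1}$, which then propagates through any later canonicalizations. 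The Weyl identity $s_{\beta_i}(\beta_j) = \beta_j - \beta_{ij}\beta_i$ gives $\beta_j(b_i) = \beta_i(t_i)^{-\beta_{ij}}$, so each $i<j$ with $\epsilon_i = -$ adds a term proportional to $\beta_{ij}\beta_i$ (with sign depending on $\epsilon_j$) to the $j$-th tangent character at $p_\epsilon$.

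Taking the dual basis of these characters yields the cone at $p_\epsilon$. Comparing the dual-basis vectors across the $2^N$ fixed points shows that there are exactly $2N$ distinct primitive ray generators, labeled $e_j^+, e_j^-$ for $j\in\{1,\dots,N\}$, so that the cone at $p_\epsilon$ is generated by $e_j^{\epsilon_j}$; choosing $e_1^+,\dots,e_N^+$ as the generators of the cone at $(+,\dots,+)$, this gives a basis of the cocharacter lattice $\Zbb^N$, and a direct check against the tangent characters above yields $e_i^- = -e_i^+ - \sum_{j>i}\beta_{ij}e_j^+$. Finally, the divisor $\mathcal{Z}_i(0)$ defined by $p_i\in B$ is torus-invariant and contains exactly the $2^{N-1}$ fixed points with $\epsilon_i = +$, hence its ray is the common edge $e_i^+$ of all cones corresponding to such $\epsilon$. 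The main obstacle is the careful bookkeeping of how the $\psi_0$-twists compound when several $\epsilon_i = -$ occur: each such index introduces a $T$-element $b_i$ whose contribution to each later tangent character is dictated by the generalized Cartan matrix entry $\beta_{ij}$ via the Weyl action, and keeping track of signs and of which indices affect which tangent directions is the technical heart of the argument.
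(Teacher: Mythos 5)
Your overall strategy --- locate the $2^N$ torus-fixed points $p_\epsilon$, read off the maximal cones from the tangent characters there, and match rays across adjacent cones --- is viable and in fact runs parallel to the paper's proof: the paper builds, for each $\epsilon$, an explicit affine chart $\phi_\epsilon(\Bbbk^N)$ centered at exactly your $p_\epsilon$ (with $s_{\beta_i}$ represented by $n_{-\beta_i}$), and your mechanism for the twist --- canonicalize $t_is_{\beta_i}t_i^{-1}=s_{\beta_i}b_i$ with $b_i=s_{\beta_i}(t_i)t_i^{-1}\in T$ and push $b_i=\psi_0(b_i)$ rightwards through the quotient --- is the right one. Your identification of $\mathcal{Z}_i(0)$ through the fixed points it contains is also fine.

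However, the one concrete formula you commit to is wrong, and it fails precisely at the point you yourself call the technical heart. You claim that each $i<j$ with $\epsilon_i=-$ contributes a term proportional to $\beta_{ij}\beta_i$ to the $j$-th tangent character at $p_\epsilon$. But when the correction $b_i$ travels from slot $i$ to slot $j$ it must pass through every intermediate slot $i'$ with $i<i'<j$, and at each such slot with $\epsilon_{i'}=-$ it is conjugated past $n_{-\beta_{i'}}$ and hence reflected by $s_{\beta_{i'}}$. Consequently the coefficient of $\beta_i$ in the $j$-th tangent character is not $\pm\beta_{ij}$ but a signed sum over chains $i=i_0<i_1<\dots<i_k=j$ with $\epsilon_{i_x}=-$ for $x<k$ of the products $\prod_{x}\beta_{i_xi_{x+1}}$ (these chain sums are exactly the quantities $\alpha^\epsilon_{ij}$ that the paper computes in Lemma \ref{lemme2}). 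Concretely, for $N=3$ and $\epsilon=(-,-,+)$ the third tangent character has $\beta_1$-coefficient $\pm(\beta_{13}-\beta_{12}\beta_{23})$, not $\pm\beta_{13}$; in type $A_3$ with $\tilde w=s_{\alpha_1}s_{\alpha_2}s_{\alpha_3}$ this is $\pm 1$ while your formula predicts $0$. This is not cosmetic: with only the first-order terms the dual bases computed at adjacent fixed points do not glue to a fan with $2N$ rays, and the relation $e_i^-=-e_i^+-\sum_{j>i}\beta_{ij}e_j^+$ does not come out. The paper sidesteps the issue by computing, on each chart, which monomials in the global coordinates $X_i$ are regular; the exponent of the chart coordinate $x_j$ in $\prod X_i^{k_i}$ is linear in the $\beta_{ij}$, so the chain terms never enter the fan computation and are deferred to Lemma \ref{lemme2}. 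If you insist on tangent weights you must carry the reflected corrections $s_{\beta_{i_{k-1}}}\cdots s_{\beta_{i_1}}(b_i)$ honestly and then verify the ray matching; that verification is the content currently missing from your argument.
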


\begin{ex}\label{exemple1}
If $G=\operatorname{SL(3)}$ and $\tilde{w}=s_{\alpha_1}s_{\alpha_1}$, we have the following fan.

\begin{center}
\includegraphics[width=5cm]{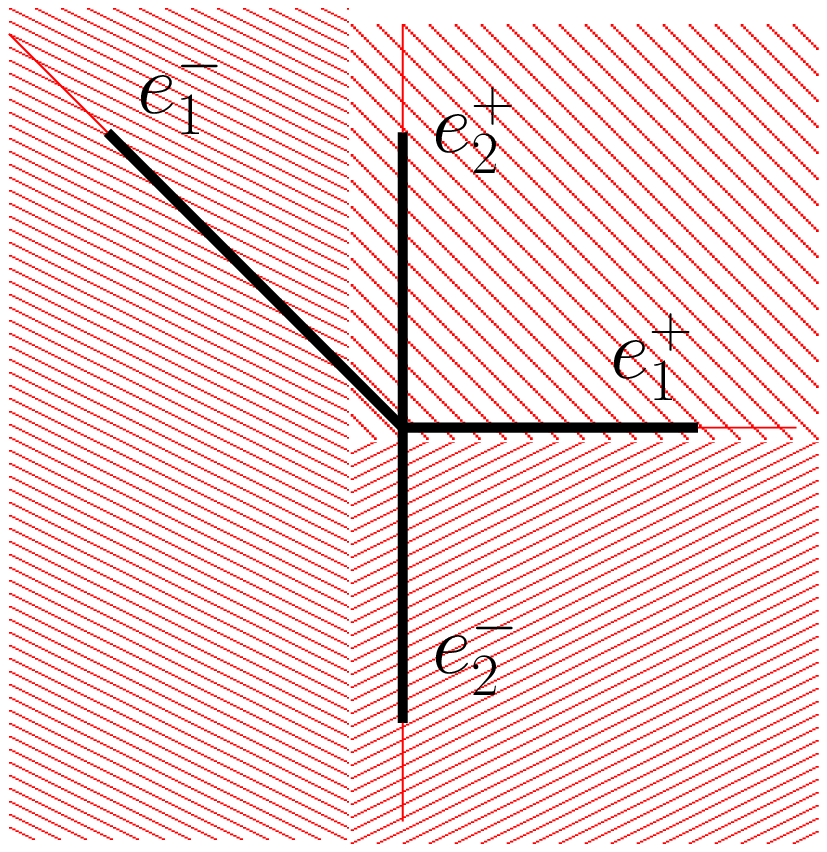}
\end{center}
\end{ex}

In fact, one can prove that the Bott-Samelson variety of Example \ref{exemple1} is isomorphic the toric variety $\mathfrak{X}(0)$. But this is not the general case. For example, if $G=\operatorname{SL(2)}$ and $\tilde{w}=s_{\alpha_1}s_{\alpha_2}$, $X(\tilde{w})$ is a toric variety (it is in fact $\Pbb^1_\Bbbk\times\Pbb^1_\Bbbk$) but it is not isomorphic to $\mathfrak{X}(0)$. And, if $G=\operatorname{SL(4)}$ and $\tilde{w}=s_{\alpha_2}s_{\alpha_1}s_{\alpha_3}s_{\alpha_2}$, then $X(\tilde{w})$ is not a toric variety.

\begin{proof}
Let us first write a few technical results.
For all simple roots $\alpha$, there exists a unique closed subgroup $\mathcal{U}_\alpha$ of $G$ and an isomorphism $$u_\alpha: \Gbb_a\lra \mathcal{U}_\alpha\mbox{ such that } \forall t\in T,\,\forall x\in \Bbbk,\, tu_\alpha(x)t^{-1}=u_\alpha(\alpha(t)x).$$
Moreover, the $u_\alpha$ can be chosen such that $n_\alpha:=u_\alpha(1)u_{-\alpha}(-1)u_\alpha(1)$ is in the normalizer of $T$ in $G$ and has image $s_\alpha$ in $W$. And for all $x\in\Bbbk^*$ we have $$u_{-\alpha}(x)u_{\alpha}(-x^{-1})u_{-\alpha}(x)=\alpha^\vee(x^{-1})n_{-\alpha}.$$
See \cite[Chapter8]{Sp98} for the finite case. And we can reduce to the finite case in the general case by construction of $G$.

Then for all $x\in\Bbbk^*$ we also have \begin{equation}\label{formule1} n_{-\alpha} u_{-\alpha}(-x)=\alpha^\vee(x)u_{-\alpha}(x)u_\alpha(-x^{-1})=u_{-\alpha}(x^{-1})\alpha^\vee(x)u_\alpha(-x^{-1}).\end{equation}
Remark also that, for all simple root $\alpha$, the subgoup $\mathcal{U}_{-\alpha}$ is a subgroup of $P_\alpha$ and $n_{-\alpha}\in P_\alpha$.

Then, for all $\epsilon\in\{0,1\}^N$, we can define an embedding $\phi_\epsilon$ of $\Bbbk^N$ in $\mathfrak{X}(0)$ by $$\phi_\epsilon(x_1,\cdots,x_N)=((n_{-\beta_1})^{\epsilon_1}u_{-\beta_1}((-1)^{\epsilon_1})x_1),\dots,(n_{-\beta_N})^{\epsilon_N}u_{-\beta_N}((-1)^{\epsilon_N})x_N).$$ Note that, the $\phi_\epsilon(\Bbbk^N)$ with $\epsilon\in\{0,1\}^N$, are the  maximal affine $(\Cbb^*)^N$-stable subvarieties of $\mathfrak{X}(0)$. 
Moreover, if for all $i\in\{1,\dots,N\}$, $x_i\in\Bbbk^*$,  we prove by induction, using Equation \ref{formule1} and the defintion of $\mathfrak{X}(0)$, that \begin{eqnarray} \phi_\epsilon(x_1,\cdots,x_N) & = & (u_{-\beta_1}(x_1^{(-1)^{\epsilon_1}})\beta_1^\vee(x_1^{-1})^{\epsilon_1},\dots,u_{-\beta_N}(x_N^{(-1)^{\epsilon_N}})\beta_N^\vee(x_N^{-1})^{\epsilon_N})\nonumber\\
& = & (u_{-\beta_1}(x_1^{(-1)^{\epsilon_1}}), \dots, u_{-\beta_i}(x_i^{(-1)^{\epsilon_i}}\prod_{j<i}x_j^{-\epsilon_j\beta_{ji}}),\dots).\label{formule2}\end{eqnarray}

Now, let us compute the weight of regular functions of all these affine subvarieties. We need first to fix a basis of characters of $(\Bbbk^*)^N$.  Let us denote, for all $i\in\{1,\dots,N\}$, by $X_i$ the function in $\Bbbk(\mathfrak{X}(0))=\Bbbk(\phi_{(0,\dots,0)}((\Gbb_a)^N))$ defined by $X_i((u_{-\beta_1}(x_1),\dots,u_{-\beta_N}(x_N))=x_i$. Denote also by $(\chi_i)_{i\in\{1,\dots,N\}}$ the weights with $(\Bbbk^*)^N$ acts on $(X_i)_{i\in\{1,\dots,N\}}$, and by $(e_i^+)_{i\in\{1,\dots,N\}}$ the dual basis of $(\chi_i)_{i\in\{1,\dots,N\}}$. 
Then, if $\chi=\sum_{i=1}^Nk_i\chi_i$, we can check, using Equation \ref{formule2}, that $$\prod_{i=1}^NX_i^{k_i}\in \Bbbk[\phi_\epsilon((\Gbb_a)^N))]\Longleftrightarrow \forall i\in\{1,\dots,N\},\,\left\{\begin{array}{ll}
k_i=\langle\chi,e_i^+\rangle\geq 0 & \mbox{if }\epsilon_i=0 \\
-k_i-\sum_{j>i}\beta_{ij}k_j=\langle\chi,e_i^-\rangle\geq 0 & \mbox{if }\epsilon_i=1 \end{array}\right..$$
In other words, the cone associated to $\phi_\epsilon(\Bbbk^N)$ is generated by $e_1^{\epsilon_1'},\dots,e_N^{\epsilon_N'}$, where $\epsilon_i'=+$ and $-$ if $\epsilon_i=0$ and $1$ respectively.
It proves the first result of the proposition.

For the last statement, just remark that $\mathcal{Z}_i(0)$ is the divisor of $\mathfrak{X}(0)$ defined by the equation $X_i=0$, and that $X_i$ has weight $\chi_i$ which is the dual of $e_i^+$.
\end{proof}

\section{Cohomology of divisors on the toric variety $\mathfrak{X}(0)$}\label{2}

Let us first recall the result of M.~Demazure \cite{De70} on the cohomology of line bundles on smooth toric varieties. For the general theory of toric varieties, see \cite{Od88} or \cite{Fu93}.

Let $X$ be a smooth complete toric variety of dimension $N$ associated to a complete fan $\mathbb{F}$. Let $\Delta(1)$ be the set of primitive elements of one-dimensional cones of $\mathbb{F}$.
For all $\rho\in\Delta(1)$, we denote by $D_\rho$ the corresponding irreducible $(\Bbbk^*)^N$-stable divisor of $X$.
Let $D:=\sum_{\rho\in\Delta(1)}a_\rho D_\rho$. Let $h_D$ be the piecewise linear function associated to $D$, {\it i.e.} if $\mathcal{C}$ is the cone generated by $\rho_1,\dots,\rho_N$ then $h_{D|\mathcal{C}}$ is the linear function which takes values $a_{\rho_i}$ at $\rho_i$.

Denote by $X((\Bbbk^*)^N)$ be the set of characters of $(\Bbbk^*)^N$. For all  $m\in X((\Bbbk^*)^N)$, define the piecewise linear function $\phi_m:n\lmt \langle m,n\rangle+h_D(n)$. Let $\Delta(1)_m:=\{\rho\in\Delta(1)\mid\phi_m(\rho)<0\}.$ And define the simplicial scheme $\Sigma_m$ to be the set of all subset of $\Delta(1)_m$ generating a cone of $\mathbb{F}$ (we refer to \cite[Chapter I.3]{Go58} for cohomology of simplicial schemes).

The cohomology spaces $H^i(X,D)$ is a $(\Bbbk^*)^N$-module so that we have the following decomposition $$H^i(X,D)=\bigoplus_{m\in X((\Bbbk^*)^N)}H^i(X,D)_m.$$

M.~Demazure proved the following result.

\begin{teo}[\cite{De70}]\label{toriccohom}
With the notation above,
\begin{enumerate}[(i)]
\item if $\Sigma_m=\emptyset$, then  $H^0(X,D)_m=\Bbbk$ and $H^i(X,D)_m=0$ for all $i>0$;
\item if $\Sigma_m\neq\emptyset$, then $H^0(X,D)_m=0$, $H^1(X,D)=H^0(\Sigma_m,\Bbbk)/\Bbbk$ and $H^i(X,D)_m=H^{i-1}(\Sigma_m,\Bbbk)$ for all $i>1$.
\end{enumerate}
\end{teo}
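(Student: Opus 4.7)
My plan is to compute $H^i(X, \mathcal{O}_X(D))$ by \v{C}ech cohomology with respect to the affine cover $\mathcal{U} = \{U_\sigma : \sigma \in \mathbb{F}(N)\}$ by the torus-invariant charts attached to maximal cones of $\mathbb{F}$, and then to exploit the $(\Bbbk^*)^N$-action on the complex. Since each $U_\sigma$ is affine and $X$ is separated, this cover computes sheaf cohomology, and because every intersection $U_{\sigma_0} \cap \cdots \cap U_{\sigma_k}$ is again of the form $U_\tau$ with $\tau = \sigma_0 \cap \cdots \cap \sigma_k$, the \v{C}ech complex has a very clean combinatorial shape controlled by the face poset of $\mathbb{F}$.

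The key input is that $\chi^m$ is a section of $\mathcal{O}_X(D)$ over $U_\tau$ if and only if $\phi_m(\rho) \geq 0$ for every ray $\rho$ of $\tau$, so the whole \v{C}ech complex splits as a direct sum over characters $m \in X((\Bbbk^*)^N)$. Fixing $m$, the weight-$m$ piece $C^\bullet_m$ has in degree $k$ a copy of $\Bbbk$ for every $(k+1)$-subset $\{\sigma_0, \ldots, \sigma_k\} \subseteq \mathbb{F}(N)$ whose intersection cone uses only rays lying in $\Delta(1) \setminus \Delta(1)_m$. I would then identify $C^\bullet_m$ combinatorially with a shifted version of the reduced cochain complex of $\Sigma_m$, using the fact that $\mathbb{F}$ induces a cell decomposition of the sphere $S^{N-1} := (|\mathbb{F}|\setminus\{0\})/\Rbb_{>0}$ and that $C^\bullet_m$ is, on this sphere, dual to the subcomplex spanned by $\Delta(1)_m$. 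The expected outcome is a uniform isomorphism $H^i(C^\bullet_m) \cong \tilde{H}^{i-1}(\Sigma_m, \Bbbk)$, which unpacks as cases (i) and (ii) once one uses the conventions $\tilde{H}^{-1}(\emptyset) = \Bbbk$, $\tilde{H}^{-1}(\text{nonempty}) = 0$, and $\tilde{H}^0(\Sigma_m) = H^0(\Sigma_m, \Bbbk)/\Bbbk$.

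The main obstacle is precisely this combinatorial identification: one has to match the \v{C}ech differential with the simplicial coboundary, with signs compatible with the orientation of $S^{N-1}$, and one has to handle the low-degree edge cases $i = 0, 1$ by hand to obtain the asymmetric statements for $\Sigma_m$ empty versus non-empty. The shift ``$i-1$'' in Demazure's formula is exactly the degree shift coming from Alexander-style duality on $S^{N-1}$, combined with the passage from unreduced to reduced cohomology at the bottom of the complex. Once the sphere-level bookkeeping is set up correctly, the two displayed formulas are a matter of inspection.
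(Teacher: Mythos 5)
The paper does not actually prove this statement: it is quoted from Demazure \cite{De70} and used as a black box, so there is no internal proof to compare yours against. Your sketch is essentially the classical argument (the one in \cite{De70} and in Fulton's book): the \v{C}ech complex of the cover by the charts $U_\sigma$, $\sigma$ a maximal cone, splits into $(\Bbbk^*)^N$-weight pieces, and the weight-$m$ piece is a purely combinatorial complex governed by $\Delta(1)_m$. Two points deserve care if you carry it out. First, the shift $i\mapsto i-1$ is not produced by an Alexander-type duality (on $S^{N-1}$ that would send $i$ to $N-2-i$); it comes from the long exact sequence of a pair plus contractibility of the ambient object: the weight-$m$ \v{C}ech complex is the relative simplicial cochain complex of the full simplex on the set of maximal cones modulo the subcomplex $K_m$ of tuples whose common face contains a ray of $\Delta(1)_m$, whence $H^i(C^\bullet_m)\cong \tilde{H}^{i-1}(K_m,\Bbbk)$ (note that the ``allowed'' tuples form a set closed under passing to supersets, not a simplicial complex, which is exactly why a relative complex appears). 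Second, you still must identify $K_m$ with $\Sigma_m$ up to homotopy; this is a nerve-lemma argument: $K_m$ is covered by the contractible full simplices on $\{\sigma \text{ maximal}\mid \rho\in\sigma\}$ for $\rho\in\Delta(1)_m$, and the nerve of this cover is precisely $\Sigma_m$ because the fan is simplicial, so that a set of rays lying in a common maximal cone spans a cone of the fan. With those two steps made precise, the uniform formula $H^i(X,D)_m\cong\tilde{H}^{i-1}(\Sigma_m,\Bbbk)$ follows and unpacks into (i) and (ii) exactly as you indicate.
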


Applying Theorem \ref{toriccohom} to $\mathfrak{X}(0)$, with the notation of the first section, one can deduce the following (the proof is left to the reader).

\begin{cor}\label{cortoriccohom} Let $\mathcal{D}=\sum_{i=1}^Na_i\mathcal{Z}_i$ be a divisor of $\mathfrak{X}$ and $\mathcal{D}(0)$ be the corresponding divisor $\sum_{i=1}^Na_i\mathcal{Z}_i(0)$ of $\mathfrak{X}(0)$.
\begin{enumerate}[(i)]
\item If there is an integer $j$ such that $\phi_m(e_j^+)\geq 0$ and $\phi_m(e_j^-)< 0$, or, $\phi_m(e_j^+)< 0$ and $\phi_m(e_j^-)\geq 0$, then $H^i(\mathfrak{X}(0),\mathcal{D}(0))_m=0$ for all $i\geq 0$.
\item If the condition above is not satisfied, let $j_m:=\sharp\{i\in\{1,\dots,N\}\mid\phi_m(e_j^+)<0\}$, then $H^i(\mathfrak{X}(0),\mathcal{D}(0))_m=0$ for all $i\neq j)m$ and $H^{j_m}(\mathfrak{X}(0),\mathcal{D}(0))_m=\Bbbk$.
\end{enumerate}
\end{cor}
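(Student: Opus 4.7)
My plan is to unfold Theorem \ref{toriccohom} directly, exploiting the very simple combinatorics of the fan $\mathbb{F}$ described in Proposition \ref{fan}. The only data I need is that each maximal cone has the form $\langle e_1^{\epsilon_1},\dots,e_N^{\epsilon_N}\rangle$, so a subset $S\subseteq\Delta(1)=\{e_i^\pm\}$ generates a cone of $\mathbb{F}$ if and only if $S$ contains no pair $\{e_i^+,e_i^-\}$. Since $\mathcal{D}(0)=\sum_i a_i\mathcal{Z}_i(0)$ puts coefficient $a_i$ on the ray $e_i^+$ and coefficient $0$ on the ray $e_i^-$, the piecewise linear function $h_{\mathcal{D}(0)}$ takes values $h_{\mathcal{D}(0)}(e_i^+)=a_i$ and $h_{\mathcal{D}(0)}(e_i^-)=0$; in particular the quantities $\phi_m(e_i^\pm)$ appearing in the statement are exactly those governing $\Delta(1)_m$.

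For (i), suppose, up to swapping the roles of $+$ and $-$, that $\phi_m(e_j^+)\geq 0$ and $\phi_m(e_j^-)<0$. Then $e_j^+\notin\Delta(1)_m$ while $e_j^-\in\Delta(1)_m$. For any face $\sigma\in\Sigma_m$, the set $\sigma\cup\{e_j^-\}$ is still contained in $\Delta(1)_m$ and cannot contain the forbidden pair $\{e_j^+,e_j^-\}$ (because $e_j^+$ is not even in $\Delta(1)_m$), so $\sigma\cup\{e_j^-\}\in\Sigma_m$. Thus $\Sigma_m$ is a simplicial cone with apex $e_j^-$, hence nonempty and contractible. Part (ii) of Theorem \ref{toriccohom} then gives $H^0(\mathfrak{X}(0),\mathcal{D}(0))_m=0$, $H^1(\mathfrak{X}(0),\mathcal{D}(0))_m=H^0(\Sigma_m,\Bbbk)/\Bbbk=0$, and $H^i(\mathfrak{X}(0),\mathcal{D}(0))_m=H^{i-1}(\Sigma_m,\Bbbk)=0$ for $i>1$.

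For (ii), the hypothesis forces $\phi_m(e_i^+)$ and $\phi_m(e_i^-)$ to have the same sign (in the weak sense) for every $i$. Let $I:=\{i\mid \phi_m(e_i^+)<0\}$, so that $|I|=j_m$ and $\Delta(1)_m=\{e_i^+,e_i^-\mid i\in I\}$. The condition defining $\Sigma_m$ says exactly that for each $i\in I$ we choose at most one of $e_i^+,e_i^-$; hence
$$
\Sigma_m \;=\; \underset{i\in I}{\mathop{\ast}}\, \{e_i^+,e_i^-\},
$$
the simplicial join of $j_m$ copies of $S^0$. This join is (the boundary of the $j_m$-dimensional cross-polytope, hence) homeomorphic to $S^{j_m-1}$, with the conventions $S^{-1}=\emptyset$ and $S^0=$ two points. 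Its cohomology with coefficients in $\Bbbk$ is concentrated in degrees $0$ and $j_m-1$. Feeding this into Theorem \ref{toriccohom}: if $j_m=0$ then $\Sigma_m=\emptyset$ and part (i) gives $H^0=\Bbbk$ with higher cohomology zero; if $j_m=1$ then $\Sigma_m$ is two points and part (ii) gives $H^1=\Bbbk^2/\Bbbk=\Bbbk$ with all other $H^i$ vanishing; if $j_m\geq 2$ then $\Sigma_m$ is connected with $H^{j_m-1}(\Sigma_m,\Bbbk)=\Bbbk$ and all intermediate cohomology zero, so $H^{j_m}(\mathfrak{X}(0),\mathcal{D}(0))_m=\Bbbk$ is the only nonvanishing piece. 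In each subcase the unique nonzero degree is $j_m$.

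There is essentially no hard step; the only thing to be careful about is the recognition of $\Sigma_m$ in case (ii) as a join of $0$-spheres, and the boundary cases $j_m\in\{0,1\}$ where the sphere degenerates and the shift between the reduced and unreduced $H^0$ in Theorem \ref{toriccohom}(ii) has to be handled correctly. Everything else is formal.
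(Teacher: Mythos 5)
Your argument is correct and follows essentially the same route as the paper's sketch: in (i) you show $\Sigma_m$ is a cone with apex $e_j^-$ (hence contractible), and in (ii) you identify $\Sigma_m$ as the boundary complex of the $j_m$-dimensional cross-polytope, i.e.\ $S^{j_m-1}$, which is exactly the paper's ``set of faces of a $j_m$-dimensional convex polytope.'' Your explicit handling of the degenerate cases $j_m\in\{0,1\}$ and of the shift coming from $H^0(\Sigma_m,\Bbbk)/\Bbbk$ is a welcome addition to the paper's two-line sketch.
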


\begin{ex}
If $G=\operatorname{SL(3)}$ and $\tilde{w}=s_{\alpha_1}s_{\alpha_2}$, if the simplical scheme $\Sigma_m$ is not empty, it is one of the following modulo symmetries.
\begin{center}
\includegraphics[width=12cm]{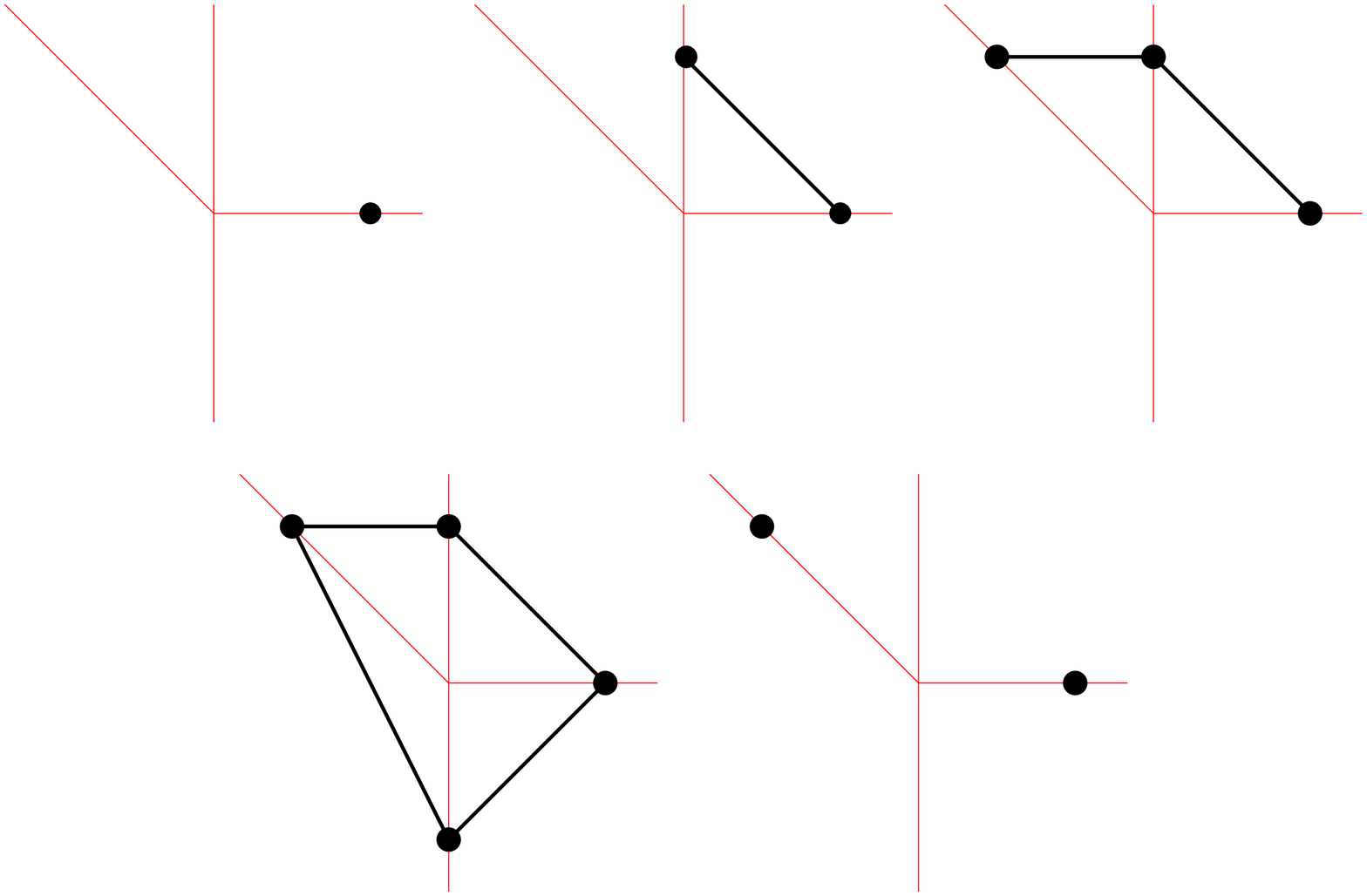}
\end{center}
In the first three cases, $H^0(\Sigma_m,\Bbbk)=\Bbbk$ and the cohomology of $\Sigma_m$ in positive degrees vanishes, and we are in the case (i) of Corollary \ref{cortoriccohom}. In fourth and fifth cases, we are in the case (ii) of Corollary \ref{cortoriccohom}. In fourth case, the non trivial cohomology are  $H^0(\Sigma_m,\Bbbk)=\Bbbk^2$ and $H^1(\Sigma_m,\Bbbk)=\Bbbk$. In fifth case, the only non trivial cohomology is $H^0(\Sigma_m,\Bbbk)=\Bbbk^2$.
\end{ex}

\begin{proof}[Sketch of proof of Corollary \ref{cortoriccohom}]
\begin{enumerate}[(i)]
\item Suppose $\phi_m(e_j^+)\geq 0$ and $\phi_m(e_j^-)< 0$. Then, all maximal simplices of $\Sigma_m$ contain $e_j^-$, so that $\Sigma_m$ is contractible.
\item One can check that $\Sigma_m$ is the set of faces of a $j_m$-dimensional convexe polytope.
\end{enumerate}
\end{proof}

We will now prove two lemmas. In the first one, we give a necessary condition on $m\in X((\Bbbk^*)^N)$ to satisfy the condition of Corollary \ref{cortoriccohom} (ii). The second lemma will be used to compute, in Case (ii), the possible values of $j_m$ which depend on the Conditions $(C_i^\pm)$.\\

First, for all $\epsilon\in\{+,-\}^N$, we define $x^\epsilon\in\Zbb^N$  by induction, as follows:  

$$x^\epsilon_i= \left\{\begin{array}{ll}
 -a_i & \mbox{ if }\epsilon_i=+\\
 -\sum_{j>i}\beta_{ij}x^\epsilon_j & \mbox{ if }\epsilon_i=-
 \end{array}\right.$$

\begin{lem}\label{convexhull}
Let $m\in\Zbb^N$ such that for all $i\in\{1,\dots,N\}$, we have either $\phi_m(e_i^+)\geq 0$ and $\phi_m(e_i^-)\geq 0$, or, $\phi_m(e_i^+)< 0$ and $\phi_m(e_i^-)< 0$.

Then $m$ is in the convex hull of the $x^\epsilon$.
\end{lem}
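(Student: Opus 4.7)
The plan is to build an explicit convex combination expressing $m$ as $\sum_\epsilon t_\epsilon x^\epsilon$, with the $t_\epsilon$ taken to be a product measure parameterized by $N$ scalars $t_1,\dots,t_N\in[0,1]$. The key identity that organizes everything is
$$\phi_m(e_i^+)+\phi_m(e_i^-)=a_i-\sum_{j>i}\beta_{ij}m_j=:S_i(m),$$
which follows from the relation $e_i^++e_i^-=-\sum_{j>i}\beta_{ij}e_j^+$ together with $h_D(e_i^+)=a_i$ and $h_D(e_i^-)=0$. The hypothesis on $m$ asserts that at each index $i$ the two values $\phi_m(e_i^\pm)$ have the same sign, hence the same sign as $S_i(m)$; this is exactly the condition that $\phi_m(e_i^-)/S_i(m)$ lies in $[0,1]$. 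I would therefore set $t_i:=\phi_m(e_i^-)/S_i(m)$ whenever $S_i(m)\neq 0$, and choose $t_i$ arbitrarily in $[0,1]$ otherwise (noting that $S_i(m)=0$ forces $\phi_m(e_i^\pm)=0$, hence $m_i=-a_i$, so any choice will work).

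Having selected the $t_i$, I would define
$$t_\epsilon:=\prod_{i\,:\,\epsilon_i=+}t_i\ \cdot\ \prod_{i\,:\,\epsilon_i=-}(1-t_i),$$
a probability distribution on $\{+,-\}^N$. The problem then reduces to verifying $\sum_\epsilon t_\epsilon x^\epsilon=m$ coordinate by coordinate. The crucial structural observation is that the recursive definition of $x^\epsilon_j$ only involves the letters $\epsilon_k$ with $k\geq j$; consequently, by independence of the Bernoulli factors, for $j>i$ one has $\sum_{\epsilon\,:\,\epsilon_i=-}t_\epsilon x^\epsilon_j=(1-t_i)\sum_\epsilon t_\epsilon x^\epsilon_j$. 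I would then argue by descending induction on $i$, starting from $i=N$ where $\sum_\epsilon t_\epsilon x^\epsilon_N=-t_Na_N=m_N$ is immediate. Splitting $\sum_\epsilon t_\epsilon x^\epsilon_i$ according to $\epsilon_i=\pm$ and using $x^\epsilon_i=-a_i$ or $x^\epsilon_i=-\sum_{j>i}\beta_{ij}x^\epsilon_j$, the inductive hypothesis gives
$$\sum_\epsilon t_\epsilon x^\epsilon_i=-t_ia_i-(1-t_i)\sum_{j>i}\beta_{ij}m_j.$$
Substituting $\sum_{j>i}\beta_{ij}m_j=a_i-S_i(m)$ and $(1-t_i)S_i(m)=\phi_m(e_i^+)=m_i+a_i$ collapses the right-hand side to $m_i$, closing the induction.

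The heart of the argument is combinatorial bookkeeping rather than any deep inequality: the main obstacle is spotting the right parametrization, and the product-measure ansatz is what makes the proof go through cleanly, since the layered dependence of $x^\epsilon_j$ on the tail $(\epsilon_j,\dots,\epsilon_N)$ matches the independence of Bernoulli coordinates. The degenerate case $S_i(m)=0$ is the only minor subtlety and is handled uniformly, because the final identity $m_i=-a_i+(1-t_i)S_i(m)$ is insensitive to the choice of $t_i$ in that case.
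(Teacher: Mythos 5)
Your proof is correct and is essentially the paper's own argument: your $t_i=\phi_m(e_i^-)/S_i(m)$ is exactly the paper's $\lambda_i$ (defined there implicitly by $m_i=-\lambda_ia_i-(1-\lambda_i)\sum_{j>i}\beta_{ij}m_j$), your product measure $t_\epsilon$ is the paper's $m^\epsilon$, and the descending induction with the marginalization identity $\sum_{\epsilon:\,\epsilon_i=-}t_\epsilon x^\epsilon_j=(1-t_i)\sum_\epsilon t_\epsilon x^\epsilon_j$ matches the paper's use of $\lambda_i=\sum_{\epsilon:\,\epsilon_i=+}m^\epsilon$ and $(1-\lambda_i)(m^\epsilon+m^{\epsilon'})=m^\epsilon$. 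Your explicit handling of the degenerate case $S_i(m)=0$ is a small welcome addition, but the route is the same.
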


\begin{proof}
Since, for all $i\in\{1,\dots,N\}$,  $\phi_m(e_i^+)=m_i+a_i$ and $-\phi_m(e_i^-)=m_i+\sum_{j>i}\beta_{ij}m_j$ have opposite signs, there exists $N$ real numbers $\lambda_1,\dots,\lambda_N$ in $[0,1]$ such that, for all $i\in\{1,\dots,N\}$, $m_i=-\lambda_ia_i-(1-\lambda_i)\sum_{j>i}\beta_{ij}m_j$.
Denote, for all $\epsilon\in\{+,-\}^N$, by $m^\epsilon$ the product $$\prod_{\begin{array}{c}\scriptstyle 1\leq i\leq N\\ \scriptstyle \epsilon_i=+\end{array}}\lambda_i\times\prod_{\begin{array}{c}\scriptstyle 1\leq i\leq N\\ \scriptstyle \epsilon_i=-\end{array}}(1-\lambda_i).$$ Remark that $m^\epsilon\in [0,1]$.

Let us prove by induction that for all $i\in\{1,\dots,N\}$ we have $m_i=\sum_{\epsilon\in\{+,-\}^N}m^\epsilon x^\epsilon_i$, {\it i.e.} $m=\sum_{\epsilon\in\{+,-\}^N}m^\epsilon x^\epsilon$.

We will use the following easy fact: for all $i\in\{1,\dots,N\}$, \begin{equation}\lambda_i=\sum_{\begin{array}{c}\scriptstyle \epsilon\in\{+,-\}^N\\ \scriptstyle \epsilon_i=+\end{array}}\label{lambdai}
m^\epsilon.\end{equation} In particular, for $i=N$, we deduce, with the definition of $x_N^\epsilon$, that $\sum_{\epsilon\in\{+,-\}^N}m^\epsilon x^\epsilon_N=-\lambda_Na_N=m_N$.

Now let $i<N$ such that, for all $j>i$, $m_j=\sum_{\epsilon\in\{+,-\}^N}m^\epsilon x^\epsilon_j$. Then \begin{eqnarray*} m_i & = & -\lambda_ia_i-(1-\lambda_i)\sum_{j>i}\beta_{ij}m_j\\
 & = & -\lambda_ia_i-(1-\lambda_i)\sum_{j>i}\beta_{ij}\sum_{\epsilon\in\{+,-\}^N}m^\epsilon x^\epsilon_j\\
 & = & -\lambda_ia_i-(1-\lambda_i)\sum_{\epsilon\in\{+,-\}^N}m^\epsilon \sum_{j>i}\beta_{ij} x^\epsilon_j.
\end{eqnarray*}
Moreover, if for all $\epsilon\in\{+,-\}^N$, we define $\epsilon'\in\{+,-\}^N$ by $\epsilon'_j=\epsilon_j$ for all $j\neq i$ and $\epsilon'_i=-\epsilon_i$, we have $$\sum_{j>i}\beta_{ij} x^\epsilon_j= \left\{\begin{array}{ll}
 -x^\epsilon_i & \mbox{ if }\epsilon_i=-\\
 -x^{\epsilon'}_i & \mbox{ if }\epsilon_i=+
 \end{array}\right.$$

Then $$m_i= -\lambda_ia_i-(1-\lambda_i)\sum_{\begin{array}{c}\scriptstyle \epsilon\in\{+,-\}^N\\ \scriptstyle \epsilon_i=-\end{array}}(m^\epsilon+m
^{\epsilon'})x^\epsilon_i.$$

We conclude by \ref{lambdai} and by checking that, for all $\epsilon\in\{+,-\}^N$ such that $\epsilon_i=-$, we have $(1-\lambda_i)(m^\epsilon+m^{\epsilon'})=m^\epsilon$.
\end{proof}

\begin{lem}\label{lemme2}
For all $\epsilon\in\{+,-\}^N$ and all $i\in\{1,\dots,N\}$, we have $\phi_{x^\epsilon }(e_i^+)=0$ and $\phi_{x^\epsilon }(e_i^-)=a_i+\sum_{j>i,\,\epsilon_j=+}\alpha_{ij}^\epsilon a_j$ if $\epsilon_i=+$ (and conversely if $\epsilon_i=-$).
\end{lem}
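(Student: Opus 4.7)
The plan is to reduce the lemma to an identity about the combinatorial coefficients $\alpha_{ij}^\epsilon$ and then prove that identity by reverse induction on $i$, after establishing a short preliminary recursion for the $\alpha_{ij}^\epsilon$ themselves.

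First, observe that $\phi_{x^\epsilon}(e_i^{\epsilon_i})=0$ is tautological from the recursive definition of $x^\epsilon_i$: if $\epsilon_i=+$ then $\phi_{x^\epsilon}(e_i^+)=x^\epsilon_i+a_i=0$, and if $\epsilon_i=-$ then $\phi_{x^\epsilon}(e_i^-)=-x^\epsilon_i-\sum_{j>i}\beta_{ij}x^\epsilon_j=0$. So the real content is the value of $\phi_{x^\epsilon}$ on the opposite ray. In the case $\epsilon_i=+$, since $x^\epsilon_i=-a_i$, the claim reduces to the identity
$$-\sum_{j>i}\beta_{ij}\,x^\epsilon_j \;=\; \sum_{j>i,\,\epsilon_j=+}\alpha_{ij}^\epsilon\,a_j. \qquad (*)$$
The case $\epsilon_i=-$ then follows from $(*)$ applied to the same $i$, since there $x^\epsilon_i=-\sum_{j>i}\beta_{ij}x^\epsilon_j$ and $\phi_{x^\epsilon}(e_i^+)=x^\epsilon_i+a_i$.

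As a preliminary step, I would establish the recursion
$$\alpha_{ij}^\epsilon \;=\; \beta_{ij} - \sum_{i<k<j,\,\epsilon_k=-}\beta_{ik}\,\alpha_{kj}^\epsilon.$$
Writing the indices $k$ in $(i,j)$ with $\epsilon_k=-$ as $k_1<\cdots<k_r$, a short induction on $r$ shows that
$$\bigl(s_{\beta_{k_1}}\cdots s_{\beta_{k_r}}\bigr)(\beta_j) \;=\; \beta_j - \sum_{l=1}^r \alpha_{k_l,j}^\epsilon\,\beta_{k_l},$$
using the fact that $\alpha_{k_l,j}^\epsilon$ only involves reflections with index in $(k_l,j)$, so peeling off the outermost factor $s_{\beta_{k_1}}$ leaves exactly the product defining $\alpha_{k_1,j}^\epsilon$. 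Pairing with $\beta_i^\vee$ then yields the recursion.

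With this in hand, $(*)$ is proven by induction on $i$ from $N$ down to $1$, the case $i=N$ being vacuous. For the inductive step, split the sum over $j>i$ according to the sign of $\epsilon_j$: when $\epsilon_j=+$ use $x^\epsilon_j=-a_j$, and when $\epsilon_j=-$ use the definition $x^\epsilon_j=-\sum_{k>j}\beta_{jk}x^\epsilon_k$, which by the inductive hypothesis equals $\sum_{k>j,\,\epsilon_k=+}\alpha_{jk}^\epsilon a_k$. Collecting the coefficient of $a_k$ for each $k>i$ with $\epsilon_k=+$ produces
$$\beta_{ik}-\sum_{i<j<k,\,\epsilon_j=-}\beta_{ij}\,\alpha_{jk}^\epsilon,$$
which is precisely $\alpha_{ik}^\epsilon$ by the preliminary recursion. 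The only real obstacle is the bookkeeping for the sublemma on reflections and the interchange of double summations in the induction; both become transparent once one notices that the recursive definitions of $\alpha_{ij}^\epsilon$ and of $x^\epsilon_i$ respect the same strict ordering on indices, so no collisions occur.
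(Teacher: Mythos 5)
Your proof is correct, and it reaches the conclusion by a genuinely different organization of the combinatorics than the paper. The paper also reduces everything to the single computation of $a_i+x^\epsilon_i$ (respectively $-x_i^\epsilon-\sum_{j>i}\beta_{ij}x_j^\epsilon$) via the same flip $\epsilon\mapsto\epsilon'$ in the $i$-th slot, and also runs a reverse induction on $i$; but where you differ is in how $x^\epsilon$ is matched against $\alpha^\epsilon_{ij}$. The paper first unrolls the recursion for $x^\epsilon_i$ into an explicit closed formula, namely an alternating sum $\sum_{k\ge 1}(-1)^{k+1}\prod_{x}\beta_{i_xi_{x+1}}$ over increasing chains $i=i_0<i_1<\dots<i_k=h$ through indices with $\epsilon_{i_x}=-$ (its Step 1), and then separately proves that $\alpha^\epsilon_{ih}$ equals the same chain sum (its Step 2, by induction on $j$, citing Lemma 3.5 of Perrin's paper \cite{Pe05}). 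You instead never write the closed formula: you extract the one-step recursion $\alpha_{ij}^\epsilon=\beta_{ij}-\sum_{i<k<j,\,\epsilon_k=-}\beta_{ik}\alpha_{kj}^\epsilon$ directly from the definition of $\alpha^\epsilon_{ij}$ as a pairing with a product of reflections (your sublemma on peeling off the outermost $s_{\beta_{k_1}}$ is correct, and the needed compatibility of index ranges does hold since the indices $k$ with $k_1<k<j$ and $\epsilon_k=-$ are exactly $k_2,\dots,k_r$), and then verify that $-\sum_{j>i}\beta_{ij}x^\epsilon_j$ satisfies the matching recursion. The two routes are equivalent --- your recursion is precisely what the paper's chain sums solve --- but yours is more economical: it avoids the bookkeeping of signed sums over chains and the appeal to an external reference, at the cost of not exhibiting the explicit combinatorial formula for $\alpha^\epsilon_{ij}$, which the paper's version produces as a byproduct. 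I checked the two points you flagged as routine (the reflection sublemma and the interchange of the double sum over $j$ and $k$), and both go through without collisions, exactly as you predict.
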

%In particular, this lemma implies that, if Condition $(C_i^+)$ is satisfied, then, for all $\epsilon\in\{+,-\}^N$, $\phi_{x^\epsilon }(e_i^+)$ and $\phi_{x^\epsilon }(e_i^-)$ are greater than $-1$.

\begin{proof}
Fix $\epsilon\in\{+,-\}^N$. The lemma follows from the three following steps.\\

Step 1. Let us first prove by induction that, for all $i\in\{1,\dots,N\}$, $$x^\epsilon_i=\sum_{\begin{array}{c}\scriptstyle i+1\leq h\leq N \\ \scriptstyle \epsilon_h=+\end{array}}\left(\sum_{k\geq 1}\sum_{\begin{array}{c}\scriptstyle i=i_0<i_1<\dots<i_k=h \\ \scriptstyle \forall x<k, \,\epsilon_{i_x}=-\end{array}}(-1)^{k+1}\prod_{x=0}^{k-1}\beta_{i_xi_{x+1}}\right)a_h+\left\{\begin{array}{ll} -a_i & \mbox{ if }\epsilon_i=+\\
 0 & \mbox{ if }\epsilon_i=-
 \end{array}\right.$$
Let $i\in\{1,\dots,N\}$.
Remark that if $\epsilon_i=+$, this equality is clearly true because for all $k\geq 1$ there exists no $i=i_0<i_1<\dots<i_k=h$ such that $\forall x<k, \,\epsilon_{i_x}=-$.
Remark also, for similar reason, that the sum from $h=i+1$ to $N$ can be replaced by the sum from $h=i$ to $N$ (always with the condition $\epsilon_h=+$).
Suppose now that $\epsilon_i=-$ and that for all $j>i$ the equality holds. Then
\begin{eqnarray*}
x^\epsilon_i & = & -\sum_{j>i}\beta_{ij}x^\epsilon_j \\
 & = &  -\sum_{j>i}\sum_{\begin{array}{c}\scriptstyle j\leq h\leq N \\\scriptstyle  \epsilon_h=+\end{array}}\left(\sum_{k\geq 1}\sum_{\begin{array}{c}\scriptstyle j=j_0<j_1<\dots<j_k=h \\ \scriptstyle  \forall x<k, \,\epsilon_{j_x}=-\end{array}}(-1)^{k+1}\beta_{ij}\prod_{x=0}^{k-1}\beta_{j_xj_{x+1}}\right)a_h+\sum_{\begin{array}{c}\scriptstyle j>i\\ \scriptstyle \epsilon_j=+\end{array}}\beta_{ij}a_j \\
 & = & \sum_{\begin{array}{c}\scriptstyle i+1\leq h\leq N \\ \scriptstyle \epsilon_h=+\end{array}}\left(\sum_{j=i+1}^h\sum_{k\geq 1}\sum_{\begin{array}{c}\scriptstyle j=j_0<j_1<\dots<j_k=h \\ \scriptstyle \forall x<k, \,\epsilon_{j_x}=-\end{array}}(-1)^{k+2}\beta_{ij}\prod_{x=0}^{k-1}\beta_{j_xj_{x+1}}\right)a_h+\sum_{\begin{array}{c}\scriptstyle j>i\\ \scriptstyle \epsilon_j=+\end{array}}\beta_{ij}a_j\\
  & = & \sum_{\begin{array}{c}\scriptstyle i+1\leq h\leq N \\ \scriptstyle \epsilon_h=+\end{array}}\left(\sum_{k\geq 2}\sum_{\begin{array}{c}\scriptstyle i=i_0<i_1<\dots<i_k=h \\ \scriptstyle \forall x<k, \,\epsilon_{i_x}=-\end{array}}(-1)^{k+2}\prod_{x=0}^{k-1}\beta_{i_xi_{x+1}}\right)a_h+\sum_{\begin{array}{c}\scriptstyle h>i\\ \scriptstyle \epsilon_h=+\end{array}}\beta_{ih}a_h.
\end{eqnarray*}
But for all $h\in\{i+1,\dots,h\}$, $\beta_{ih}$ equals $$\sum_{\begin{array}{c}\scriptstyle i=i_0<i_1<\dots<i_k=h \\ \scriptstyle \forall x<k, \,\epsilon_{i_x}=-\end{array}}(-1)^{k+2}\prod_{x=0}^{k-1}\beta_{i_xi_{x+1}}$$ when $k=1$, so that we obtain the wanted equation.\\

Step 2. For all $i\in\{1,\dots,N\}$ and $j\in\{i+1,\dots,N\}$ we have $$\alpha^\epsilon_{ij}=\sum_{k\geq 1}\sum_{\begin{array}{c}\scriptstyle i=i_0<i_1<\dots<i_k=j \\ \scriptstyle \forall x<k, \,\epsilon_{i_x}=-\end{array}}(-1)^{k+1}\prod_{x=0}^{k-1}\beta_{i_xi_{x+1}}.$$ The proof, by induction on $j$, of this formula is the same as in \cite[Lemma 3.5]{Pe05} and is left to the reader.\\

Step 3. Recall that $\phi_m(e_i^+)=m_i+a_i$ and that $\phi_m(e_i^-)=-m_i-\sum_{j>i}\beta_{ij}m_j$. Then, if $\epsilon_i=+$, we have $\phi_{x^\epsilon }(e_i^+)=0$ and $\phi_{x^\epsilon }(e_i^-)=-x^\epsilon_i-\sum_{j>i}\beta_{ij}x^\epsilon_j$. And, if $\epsilon_i=-$, we have $\phi_{x^\epsilon }(e_i^-)=0$ and $\phi_{x^\epsilon }(e_i^+)=a_i+x^\epsilon_i$. In fact, we only have to compute $\phi_{x^\epsilon }(e_i^+)$ in the case where  $\epsilon_i=-$, {\it i.e.} $a_i+x^\epsilon_i$. Indeed, if  $\epsilon_i=+$, define $\epsilon'\in\{+,-\}^N$ by $\epsilon'_j=\epsilon_j$ for all $j\neq i$ and $\epsilon'_i=-$. Then $\phi_{x^\epsilon }(e_i^-)=\phi_{x^{\epsilon'} }(e_i^+)$.
\end{proof}

We are now able to prove the vanishing theorem for divisors on the toric variety $\mathfrak{X}(0)$.

\begin{teo}\label{maintoric}
Let $\mathcal{D}=\sum_{i=1}^Na_i\mathcal{Z}_i$ be a divisor of $\mathfrak{X}$ and $\eta\in\{+,-,0\}^N$. 
Suppose that the coefficient $(a_i)_{i\in\{1,\dots,N\}}$ satisfy conditions $(C_i^{\eta_i})$ for all $i\in\{1,\dots,N\}$ such that $\eta_i\neq 0$.
Then $$H^i(\mathfrak{X}(0),\mathcal{D}(0))=0,\mbox{ for all }i<\sharp\{1\leq j \leq N\mid\eta_j=-\}\mbox{ and for all }i>N-\sharp\{1\leq j \leq N\mid\eta_j=+\}.$$
\end{teo}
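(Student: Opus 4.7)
The strategy is a weight-by-weight analysis via Corollary~\ref{cortoriccohom}. Decomposing $H^i(\mathfrak{X}(0),\mathcal{D}(0))=\bigoplus_{m\in\Zbb^N} H^i(\mathfrak{X}(0),\mathcal{D}(0))_m$, each weight summand is either zero in every degree (case (i) of the corollary) or concentrated in the single degree $j_m=\sharp\{k:\phi_m(e_k^+)<0\}$ (case (ii)). It therefore suffices to prove that every $m$ falling in case (ii) satisfies
\[
\eta^-\;\leq\; j_m\;\leq\; N-\eta^+.
\]

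Fix such an $m$. By definition of case (ii), for each index $i$ the values $\phi_m(e_i^+)$ and $\phi_m(e_i^-)$ have the same sign, so Lemma~\ref{convexhull} applies and lets me write $m=\sum_{\epsilon\in\{+,-\}^N}m^\epsilon x^\epsilon$ as a convex combination with $m^\epsilon\geq 0$ and $\sum_\epsilon m^\epsilon=1$. Since $\phi_m(v)=\langle m,v\rangle+h_{\mathcal{D}(0)}(v)$ is affine in $m$ and the coefficients sum to one, this transfers to
\[
\phi_m(v)\;=\;\sum_\epsilon m^\epsilon\phi_{x^\epsilon}(v)
\]
for every ray $v$. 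Plugging in Lemma~\ref{lemme2}, which says that $\phi_{x^\epsilon}(e_i^+)$ equals $0$ when $\epsilon_i=+$ and equals $C_i^\epsilon$ when $\epsilon_i=-$ (and symmetrically on $e_i^-$), the two half-sums collapse to the clean identity
\[
\phi_m(e_i^+)+\phi_m(e_i^-)\;=\;\sum_{\epsilon\in\{+,-\}^N}m^\epsilon\,C_i^\epsilon.
\]

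Now the hypotheses enter. If $\eta_i=+$, condition $(C_i^+)$ gives $C_i^\epsilon\geq -1$ for every $\epsilon$, so the above sum is $\geq -\sum_\epsilon m^\epsilon=-1$. But in case (ii) the two values $\phi_m(e_i^+),\phi_m(e_i^-)$ are integers of the same sign; if both were negative they would each be $\leq -1$ and the sum would be $\leq -2$, contradicting the bound. Hence both must be $\geq 0$, so $i$ does not contribute to $j_m$. Running this over all $\eta^+$ indices with $\eta_i=+$ gives $j_m\leq N-\eta^+$. Dually, for $i$ with $\eta_i=-$, condition $(C_i^-)$ forces the same sum to be $\leq -1$; two same-sign quantities whose sum is strictly negative must themselves be negative, so $\phi_m(e_i^+)<0$ and $i$ does contribute to $j_m$, yielding $j_m\geq\eta^-$. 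Combining the two bounds and applying Corollary~\ref{cortoriccohom} completes the proof.

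The only substantive step, and the one I expect to require the most care in writing up, is this short integrality argument: the convex-combination identity provides only the real inequality $\phi_m(e_i^+)+\phi_m(e_i^-)\geq -1$ (respectively $\leq -1$), and it is the conjunction of this with the same-sign integrality forced by being in case (ii) that rules out the borderline configurations. Everything else is bookkeeping built on Lemmas~\ref{convexhull} and~\ref{lemme2}.
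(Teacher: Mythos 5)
Your proof is correct and follows the paper's own argument essentially step for step: the weight-space decomposition via Corollary~\ref{cortoriccohom}, the convex-combination representation $m=\sum_\epsilon m^\epsilon x^\epsilon$ from Lemma~\ref{convexhull}, the evaluation of $\phi_{x^\epsilon}(e_i^\pm)$ from Lemma~\ref{lemme2}, and a final integrality argument bounding $j_m$ between $\eta^-$ and $N-\eta^+$. The only (harmless) divergence is in how the borderline $(C_i^+)$ case is excluded: you observe that two negative integers sum to at most $-2$, contradicting $\sum_\epsilon m^\epsilon C_i^\epsilon\geq -1$, whereas the paper argues that $\phi_m(e_i^+)=-1$ would force $m^\epsilon=0$ for all $\epsilon$ with $\epsilon_i=+$ and hence $\phi_m(e_i^-)=0$; both variants are valid.
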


\begin{proof}
Let $m\in X((\Bbbk^*)^N)$ such that $H^i(\mathfrak{X}(0),\mathcal{D}(0))_m$ is not zero for all $i\in\{1,\dots,N\}$. Then, by Corollary \ref{cortoriccohom} (i) and Lemma \ref{convexhull}, there exist non negative real numbers $m^\epsilon$ with $\epsilon\in\{+,-\}^N$ such that $\sum_{\epsilon\in\{+,-\}^N}m^\epsilon=1$ and $m=\sum_{\epsilon\in\{+,-\}^N}m^\epsilon x^\epsilon$. 

  Then, by Lemma \ref{lemme2}, $$\phi_m(e_i^+)=\sum_{\begin{array}{c}\scriptstyle \epsilon\in\{+,-\}^N\\ \scriptstyle \epsilon_i=-\end{array}}m^\epsilon C_i^\epsilon  \hspace{1cm}\mbox{ and }\hspace{1cm}  \phi_m(e_i^-)=\sum_{\begin{array}{c}\scriptstyle \epsilon\in\{+,-\}^N\\ \scriptstyle \epsilon_i=+\end{array}}m^\epsilon C_i^\epsilon.$$

Then, if Condition $C_i^-$ is satisfied, we have $\phi_m(e_i^+)$ and $\phi_m(e_i^-)$ are both negative. And if Condition $C_i^+$ is satisfied and if  the integers $\phi_m(e_i^+)$ and $\phi_m(e_i^-)$ are not both non-negative, then one of them equals $-1$ (say for example $\phi_m(e_i^+)$). It means that for all $\epsilon \in\{+,-\}^N$ such that $\epsilon_i=+$, we have $m^\epsilon=0$. Then $\phi_m(e_i^-)=0$ that is not possible by hypothesis on $m$ and Corollary \ref{cortoriccohom} (i). 

We conclude the proof by Corollary \ref{cortoriccohom} (ii).
\end{proof}

\begin{ex}
If $G=\operatorname{SL(3)}$ and $\tilde{w}=s_{\alpha_1}s_{\alpha_2}$, the vanishings of the cohomology of the divisor $\mathcal{D}=a_1\mathcal{Z}_1+a_2\mathcal{Z}_2$ obtained by Theorem \ref{maintoric} is reprensented in the following picture.
\begin{center}
\includegraphics[width=12cm]{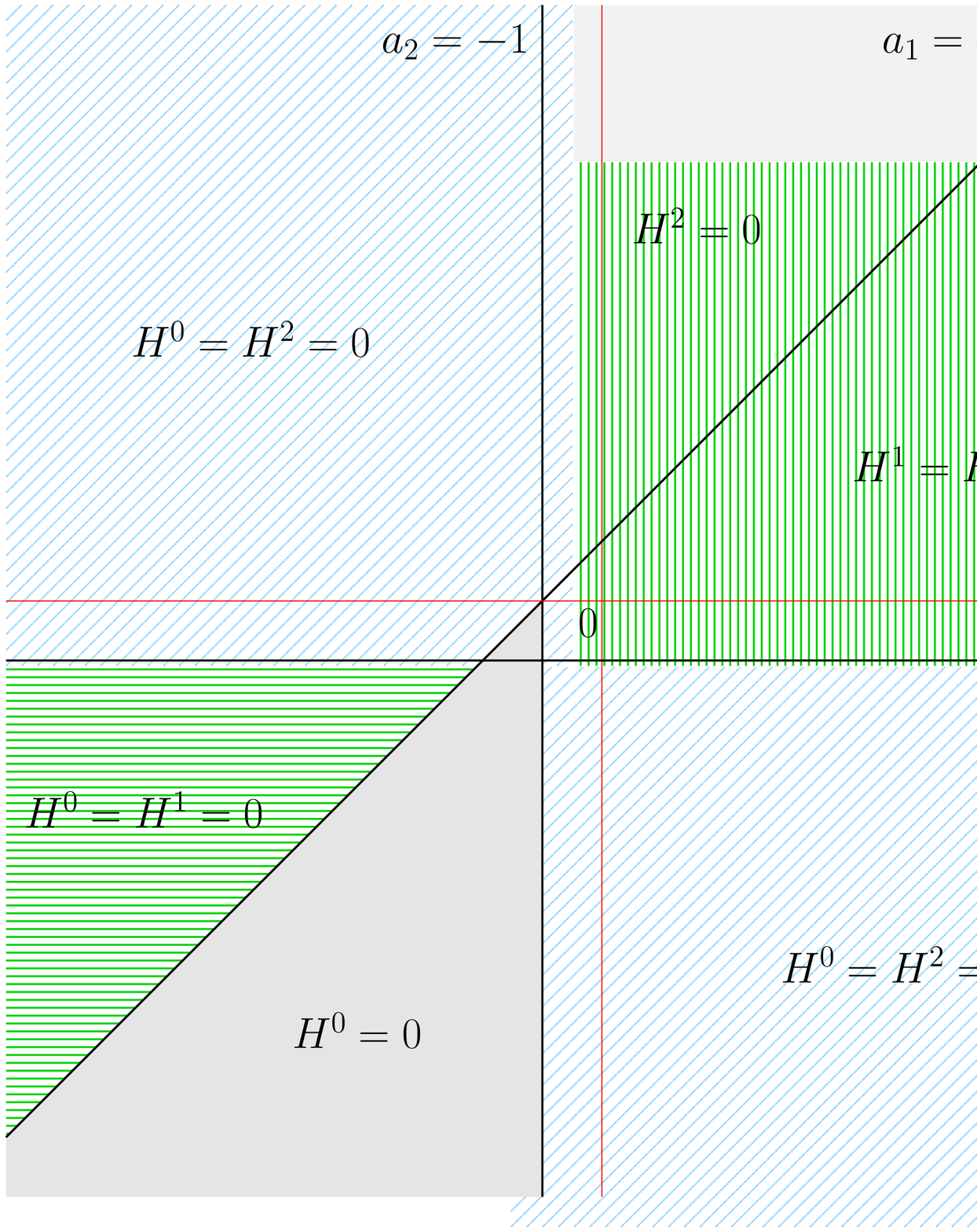}
\end{center}
\end{ex}

Let us now discuss, with a more general example, what sort of vanishings Theorem \ref{main} gives.

\begin{ex}\label{exemple}
Let $G=\operatorname{SL(4)}$ and $\tilde{w}=s_{\alpha_2}s_{\alpha_1}s_{\alpha_3}s_{\alpha_2}$ (with natural notation).
Let $D=\sum_{i=1}^4a_iZ_i$ be a divisor of $X(\tilde{w})$. Then, all the integers $C_i^\epsilon$ we obtain are the followings:
$$\begin{array}{cc}
i=4 & a_4\\
i=3 & a_3,\,a_3-a_4\\
i=2 & a_2,\,a_2-a_4\\
i=1 & a_1,\,a_1-a_2,\,a_1-a_3,\,a_1-a_2-a_3,\,a_1-a_2+a_4,\,a_1-a_3+a_4,\,a_1-a_2-a_3+2a_4.
\end{array}$$
In particular, Conditions $(C_i^+)_{i\in\{1,2,3,4\}}$ are equivalent to $a_4\geq -1$, $a_3\geq a_4-1$, $a_2\geq a_4-1$ and $a_1\geq a_2+a_3-1$. In that case, Theorem \ref{main} tells us that the cohomology of $D$ vanishes in non zero degree. But this fact can already be deduced by \cite[Theorem 7.4]{LT04}. Actually, the theorem of N.~Lautitzen and J.F.~Thomsen gives us the vanishing of the cohomology of $D$ in non zero degree exactly for all $D$ such that only if $a_4\geq -1$, $a_3\geq \max(a_4-1,-1)$, $a_2\geq \max(a_4-1,-1)$ and $a_1\geq \max(a_2+a_3-a_4-1,-1)$. 

Let us consider $D=2Z_1+2Z_2+2Z_3+2Z_4$, by the latter assertion the cohomology of $D$ in non zero degree vanishes. But one can compute that the cohomology of the corresponding divisor on $\mathfrak{X}(0)$ is not trivial in degree~1 (indeed, we have for example $H^1(\mathfrak{X}(0),\mathcal{D}(0))_m=\Bbbk$ when $m=\frac{1}{2}(x^{(-,+,+,+)}+x^{(-,+,+,-)})=(0,-2,-2,-3)$). 

Theorem \ref{main} is not as powerful as the results of N.~Lautitzen and J.F.~Thomsen for ``positive'' divisors (or also for ``negative'' divisors). But for all other divisors it gives many new vanishings results.

For example, if $a_4\geq 0$, $a_3\geq a_4$, $a_2<0$, Theorem \ref{main} gives the vanishing of the cohomology of $D$ in degree 0, 3 and 4.
\end{ex}

\begin{rems}
Theorem \ref{main} is easy to apply to a given divisor of a Bott-Sameslon variety. Indeed, we made a program that takes a triple $(A,\tilde{w},Z)$ consisting of a Cartan matrix $A$, an expression $\tilde{w}$ and a divisor $Z$ of $X(\tilde{w})$, and that computes the vanishing results in the cohomology of $Z$ given by Theorem \ref{main} (contact the author for more detail).

We can also obtain vanishing results in the cohomology of line bundles on Schubert varieties. These results are also computable. Then, we remark that, as for Bott-Samelson varieties, we do not recover all the already-known vanishing results on ``positive'' line bundles, but it gives new results for more general line bundles. And we also remark that the result we obtain depends on the choosen reduced expression of the element of the Weil group associated to the Schubert variety.
\end{rems}

\bigskip\noindent

\medskip\noindent
Boris {\sc Pasquier}, Hausdorff Center for Mathematics,
Universit{\"a}t Bonn, Landwirtschaftskammer (Neubau)
Endenicher Allee 60, 53115 Bonn, Germany.

\noindent {\it email}: \texttt{boris.pasquier@hcm.uni-bonn.de}

\end{document}